\newtheorem{theorem}{Theorem}%[section]
\newtheorem*{theorem*}{Theorem}
\newtheorem{corollary}[theorem]{Corollary}
\newtheorem{lemma}[theorem]{Lemma}
\newtheorem{proposition}[theorem]{Proposition}
\newtheorem*{claim*}{Claim}
\theoremstyle{remark}
 \newtheorem{remark}[theorem]{Remark}}
 \newtheorem*{remark*}{Remark}
{\theoremstyle{definition}

 \newtheorem{example}[theorem]{Example}

}
\begin{document}

\title{Codimension two torus actions on the affine space}

\author[A. Liendo]{Alvaro Liendo} %
\address{Instituto de Matem\'aticas, Universidad de Talca, Talca, Chile} %
\email{aliendo@utalca.cl}

\author[C. Petitjean]{Charlie Petitjean} %
\address{Université de Bourgogne} %
\email{charlie.petitjean@u-bourgogne.fr}

\date{\today}

\thanks{{\it 2000 Mathematics Subject
    Classification}: 14L30, 14R05, 14R10, 14R20.  \\
 \mbox{\hspace{11pt}}{\it Key words}: Linearization conjecture, affine varieties, contractible varieties, torus actions.\\
 \mbox{\hspace{11pt}} The first author was partially supported by Fondecyt project 1240101.}

\begin{abstract}
In this paper, we classify smooth, contractible affine varieties equipped with faithful torus actions of complexity two, having a unique fixed point and a two-dimensional algebraic quotient isomorphic to a toric blow-up of a toric surface. These varieties are of particular interest as they represent the simplest candidates for potential counterexamples to the linearization conjecture in affine geometry.
\end{abstract}

\maketitle

\section*{Introduction}

The linearization conjecture, a major open problem in affine geometry \cite{K96} first proposed by Kambayashi \cite{K79}, is stated as follows:
\begin{quote}
Let $G$ be a linearly reductive algebraic group acting regularly on $\mathbb{A}^{n}$. Then, $G$ has a fixed point, say $P$, and the action of $G$ is linear with respect to a suitable coordinate system of $\mathbb{A}^{n}$ with $P$ as the origin.
\end{quote}

In the two-dimensional affine space, Jung \cite{J42} and Gutwirth \cite{G62} proved the conjecture prior to Kambayashi's formal statement, and their work served as a precursor to the actual conjecture. 

For the affine spaces of dimention 3 and 4, Kraft, Popov, and Panyushev \cite{KP85,P83} established that every semisimple group action is linearizable. Nevertheless, Schwarz \cite{S89} disproved this conjecture by providing examples of non-linearizable actions of the orthogonal group $\operatorname{O}_{2}$ on $\mathbb{A}^{4}$ and of $\operatorname{SL}_{2}$ on $\mathbb{A}^{7}$. To date, counterexamples have been found for all connected reductive groups except tori \cite{K91}, as well as for many non-commutative finite groups \cite{MMJP91}.

\smallskip

We restrict ourselves now to the case of tori. Let $X$ be an algebraic variety of dimension $n$ over the complex numbers $\mathbb{C}$. Let $T=\mathbb{G}_\textrm{m}^{d}$ be the algebraic torus of dimension $d$. The complexity of a $T$-action $\alpha$ on a variety $X$ is the codimension of the general orbit. If $\alpha$ is faithful, the complexity corresponds simply to $\dim(X) - \dim(T)=n-d$. In the case where the $T$-action has complexity 0, we say that $X$ is a toric variety.

Białynicki-Birula  showed that any codimension-one torus action in the affine space is  linearizable \cite{BB67}. And after a long series of papers by Kambayashi, Koras, Russell, Kaliman, and Makar-Limanov proved that the  linearization conjecture holds for 1-dimensional torus acting on 3-dimensional affine space with complexity 2. \cite{KR82,KR86,KR88.1,KR88.2,KR97,KML97,KKMLR97}. 

Let us now review the state of the art for complexity-two torus actions on affine space.  Independently, Koras-Russell \cite{KR88.1} and Kraft-Schwarz \cite{KS88,KS92} proved that if the fixed-point locus $F$ has dimension at least one, then the action is linearizable. The latter two authors, applying results by Bass-Haboush \cite{BH85}, showed that if the algebraic quotient $\mathbb{A}^n/\!/T$ is 1-dimensional, then it is isomorphic to the affine line $\mathbb{A}^1$, and moreover, that the set of fixed points is either a point or isomorphic to $\mathbb{A}^1$. Finally, in the case where the action is unmixed, i.e., where the algebraic quotient is $0$-dimensional and provides a retraction to the fixed point set, the action is once again linearizable by \cite{KR82}. 

Summarizing, in the case of complexity-two torus actions, the only remaining domain where counterexamples to the linearization conjecture may be found is for a torus action with an unique fixed point and an a $2$-dimensional algebraic quotient. Moreover, this algebraic quotient must be isomorphic to $\mathbb{A}^{2}/\!/\mu$, where $\mu$ is a finite cyclic group by \cite{GKR08}. 

The aim of this paper is to prove the following theorem, which is a generalization of \cite[Theorem 4.1]{KR97} and \cite[Theorem 8]{P18}.

\begin{theorem}
\label{thm:A-smooth-contractible}
A smooth, contractible affine variety $X$ with a faithful $T$-action of complexity two, a unique fixed point $x_0$, and an algebraic quotient isomorphic to $\mathbb{A}^{2}/\!/\mu$, where $\mu$ is a finite cyclic group, is determined by the following data:
\begin{enumerate}
\item[$(a)$] The linear $T$-action on the tangent space $T_{x_0}X \simeq \mathbb{A}^{n}$ at the fixed point $x_0$. 
\item[$(b)$] Two curves $C_1$ and $C_2$ in $\mathbb{A}^2$, invariant under $\mu$, each containing the origin and isomorphic to $\mathbb{A}^1$, intersect with only simple normal crossings.
\end{enumerate}
\end{theorem}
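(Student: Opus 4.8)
The natural framework here is the Altmann--Hausen theory of $T$-varieties, which encodes a normal affine $T$-variety of complexity two as a proper polyhedral divisor $\mathcal{D} = \sum_{i} \Delta_{i} \otimes D_{i}$ on a normal semiprojective surface $Y$: the $\Delta_{i}$ are polyhedra in $N_{\mathbb{R}}$ (with $N$ the lattice of one-parameter subgroups of $T$) sharing a common tail cone $\sigma$, and the $D_{i}$ are prime divisors on $Y$. The plan is to first produce such an encoding for $X$, then to read the data $(a)$ and $(b)$ off $\mathcal{D}$, and finally to show that $(a)$ and $(b)$ reconstruct $X$ uniquely. Since the algebraic quotient is $\mathbb{A}^{2}/\!/\mu$, the base $Y$ comes with a proper birational morphism $\pi\colon Y \to \mathbb{A}^{2}/\!/\mu$, and because $\mathbb{A}^{2}/\!/\mu$ is a toric surface, $Y$ may be taken to be a toric blow-up of it, the model alluded to in the abstract. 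Throughout, $q\colon X \to \mathbb{A}^{2}/\!/\mu$ denotes the quotient morphism and $\varpi\colon \mathbb{A}^{2} \to \mathbb{A}^{2}/\!/\mu$ the finite quotient by $\mu$.

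The first step is to pin down the combinatorial data attached to the fixed point. The linear $T$-representation on $T_{x_{0}}X$ of $(a)$ provides a collection of weights in the character lattice $M$; their convex hull determines the weight cone $\omega \subset M_{\mathbb{R}}$ and hence, dually, the tail cone $\sigma = \omega^{\vee} \subset N_{\mathbb{R}}$ of $\mathcal{D}$. The existence of a single fixed point forces $\sigma$ to be full-dimensional and the polyhedral divisor to possess a distinguished full-dimensional vertex fiber over the image of $x_{0}$, so that $(a)$ is exactly the infinitesimal shadow of the toric structure of the central fiber of $q$. Thus $(a)$ is equivalent to the tail-cone and vertex data of $\mathcal{D}$.

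The technical core is the smoothness analysis. Applying the smoothness criterion for polyhedral divisors together with the hypothesis that $x_{0}$ is the unique fixed point, I would show that the locus on $Y$ where $\Delta_{i} \neq \sigma$ is supported, modulo the exceptional divisors of $\pi$ (whose coefficients are already forced by properness and the smoothness of $Y$, and so carry no extra moduli), on exactly two non-exceptional prime divisors $D_{1}, D_{2}$, each contributing a single lattice vertex. Pushing these forward along $\pi$ and pulling back along $\varpi$ produces two $\mu$-invariant curves $C_{1}, C_{2} \subset \mathbb{A}^{2}$ through the origin; smoothness of $X$ forces each $C_{i}$ to be smooth, hence isomorphic to $\mathbb{A}^{1}$ as a $\mu$-invariant contractible affine curve, and forces their total transform to have only simple normal crossings. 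This yields the data $(b)$.

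Conversely, given $(a)$ and $(b)$ one reassembles $\mathcal{D}$ — the tail cone and vertices from $(a)$, the support and coefficients from $(b)$, the exceptional data being determined — and takes the associated $T$-variety; checking that this is inverse to the extraction above completes the classification. I expect the principal obstacle to be the third step: translating the two global hypotheses, smoothness and contractibility, into the purely combinatorial statement that the essential support consists of precisely two smooth, transversally meeting curves. Contractibility is the delicate one, since it is a topological condition with no direct combinatorial avatar; I anticipate controlling it through the $T$-action by a Białynicki-Birula decomposition argument comparing the homology of $X$ with that of the tangent representation in $(a)$, thereby forcing the coefficient polyhedra to be minimal and the support to be as small as the statement requires.
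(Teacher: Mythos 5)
Your overall framework is the right one and matches the paper's: both encode $X$ as $X(Y,\mathcal{D})$ via Altmann--Hausen, identify the tail cone and polyhedral coefficients with the tangent representation at $x_0$ (the paper's \cref{prop:affine}), and use the smoothness criterion of \cite{LP17} to force local toric structure, hence smooth curves meeting with simple normal crossings. However, there are two genuine gaps. First, your deduction that each $C_i$ is isomorphic to $\mathbb{A}^1$ does not follow from what you have established: smoothness of $X$ only forces the $C_i$ to be \emph{smooth} curves, and a smooth $\mu$-invariant affine curve through the origin of $\mathbb{A}^2$ need not be a line (it need not even be rational). The phrase ``as a $\mu$-invariant contractible affine curve'' assumes exactly the point at issue, namely the contractibility of the curve. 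The paper closes this gap by splitting into two cases according to the coefficient $\Delta_i$: when $\Delta_i$ is a vertex it encodes a cyclic cover, and contractibility of $X$ together with Kaliman's theorem \cite{K93} forces the branch curve to be $\mathbb{Z}_k$-acyclic, whence $C_i\simeq\mathbb{A}^1$ by Abhyankar--Moh--Suzuki \cite{AM75,S74}; when $\Delta_i$ has nonempty relative interior, $C_i$ is the algebraic quotient $X/\!/T'$ of a fixed-point locus of a subtorus, and \cite[Lemma 5.6]{KPR89} gives $C_i\simeq\mathbb{A}^1$. Neither mechanism appears in your sketch, and without one of them the step fails.

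Second, your plan to control contractibility via a Bia{\l}ynicki-Birula decomposition is doubtful here: the action is fully hyperbolic (tail cone $\{0\}$), so there is no attracting one-parameter subgroup and no source/sink structure on the affine variety to compare homology with the tangent representation. The paper instead feeds contractibility and the uniqueness of the fixed point into the argument at two specific places: (i) to exclude extra prime divisors in the support of $\mathcal{D}$ away from the image of $x_0$, it shows via \cref{lem:fixed-points} that such a divisor with full-dimensional coefficient would produce a \emph{disconnected} fixed-point set for some one-dimensional subtorus, while a vertex coefficient would produce a cyclic cover branched away from the fixed point, both contradictions; and (ii) in the $\mathbb{A}^1$-ness argument just described. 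You gesture at excluding extra support ``together with the hypothesis that $x_0$ is the unique fixed point'' but do not supply the subtorus/fixed-locus mechanism that actually makes this work. These two points need to be repaired before the argument is complete.
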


Recall that two curves $C_1, C_2$ in  $\mathbb{A}^2$ are said to intersect only with simple normal crossing if, for every $x\in C_1\cap C_2$, there exists an analytic neighborhood $U$ of $x\subset \mathbb{A}^2$ such that $(U,C_1\cap U,C_2\cap U)$ is analytically isomorphic to $\mathbb{A}^2$ with the coordinate axes.

\section{Overview of the Altmann-Hausen framework}
\label{recollection}

This section provides a brief overview of the  combinatorial description of $T$-varieties, which will be used throughout the paper. A $T$-variety is a normal variety endowed with a faithful regular action of an algebraic torus $T$. The complexity of a $T$-variety is determined as the codimension of a general orbit under the $T$-action. Since the action is faithful, the complexity of a $T$-variety $X$ is given by $\dim X - \dim T$. The simplest examples of $T$-varieties are those of complexity zero, commonly referred to as toric varieties. These were initially introduced by Demazure in \cite{D70}. Toric varieties are characterized by their combinatorial structure, which is encoded in certain collections of strongly convex rational polyhedral cones in $\mathbb{R}^n$, known as fans.For further details, refer to \cite{CLS11}.

For $T$-varieties of higher complexity, a geometric-combinatorial description is also available. Altmann and Hausen developed such a description for affine $T$-varieties in \cite{AH06}, which we refer to as the AH-presentation. Below, we provide a concise exposition of this presentation.

An algebraic torus $T$ can be described via its character and $1$-parameter subgroup lattices, denoted $M$ and $N$, respectively. The group $M$ is composed by characters, i.e., morphisms $u: T \to \mathbb{C}^*$, while $N$ consists of $1$-parameter subgroups, i.e., morphisms $p: \mathbb{C}^* \to T$. Both $M$ and $N$ are free abelian groups of rank $\dim T$, with a natural pairing $M \times N \to \mathbb{Z}$ given by $\langle u, p \rangle$, where $\langle u, p \rangle$ satisfies $u \circ p(t) = t^{\langle u, p \rangle}$ for all $t \in \mathbb{C}^*$. We also let $M_\mathbb{R}=M\otimes_\mathbb{Z} \mathbb{R}$ and $N_\mathbb{R}=N\otimes_\mathbb{Z} \mathbb{R}$ be the corresponding real vector spaces and we extend the duality to them.

The combinatorial nature of $T$-varieties derives from the well known fact that the comorphism $\mathbb{C}[X]\rightarrow \mathbb{C}[M]\otimes \mathbb{C}[X]$ of the $T$-action on an affine $T$-variety induces an $M$-grading on $\mathbb{C}[X]$ and, conversely, every $M$-grading on $\mathbb{C}[X]$ arises in this way
\cite[Section~2.1]{KR82}.

\subsection{Toric varieties}
\label{sec:toric}

A toric variety is a $T$-variety of complexity zero, meaning it is a normal variety with a faithful $T$-action and an open dense orbit. Its combinatorial structure is captured by a fan $\Sigma$ in $N_\mathbb{R}$, which is a finite collection of strictly convex rational polyhedral cones closed under taking faces and intersections. For any cone $\sigma \in \Sigma$, the associated affine toric variety is $X(\sigma) = \operatorname{Spec} \mathbb{C}[\sigma^\vee \cap M]$, where $\sigma^\vee$ is the dual cone in $M_\mathbb{R}$ and $\mathbb{C}[\sigma^\vee \cap M]$ is the semigroup algebra:
\begin{align*}
\mathbb{C}[\sigma^\vee \cap M] = \bigoplus_{u \in \sigma^\vee \cap M} \mathbb{C} \cdot \chi^u,\quad \chi^{u+u'} = \chi^u \cdot \chi^{u'}, \; \forall u, u' \in \sigma^\vee \cap M.
\end{align*}
Furthermore, if $\tau\subseteq\sigma$ is a face of $\sigma$, then the inclusion of algebras $\mathbb{C}[\sigma^{\vee}\cap M]\hookrightarrow \mathbb{C}[\tau^{\vee}\cap M]$ induces a $T$-equivariant open embedding $X(\tau)\hookrightarrow X(\sigma)$ of affine $T$-varieties.  We define the toric variety $X(\Sigma)$ associated to the fan $\Sigma$ as the variety obtained by gluing the family $\{X(\sigma)\mid\sigma\in\Sigma\}$ along the open embeddings $X(\sigma)\hookleftarrow X(\sigma\cap\sigma')\hookrightarrow X(\sigma')$ for all $\sigma,\sigma'\in\Sigma$.

\subsection{Affine $T$-varieties and p-divisors}
\label{sec:comb-des-affine}

Let $\sigma$ be a stronlgly convex rational polyhedral cone in $N_{\mathbb{R}}$. We let
$\operatorname{Pol}_{\sigma}(N_{\mathbb{R}})$  be the semigroup of all rational polyhedra
with tail cone $\sigma$ under Minkowski sum.  Let $Y$ be a normal semiprojective
variety, i.e., $Y$ is projective over an affine variety. A polyhedral divisor on $Y$ is a formal sum
$\mathcal{D}=\sum_{Z}\Delta_Z\cdot Z$, where the sum runs trough all prime divisors
$Z$ in $Y$, $\Delta_Z\in\operatorname{Pol}_{\sigma}(N_{\mathbb{R}})$, and $\Delta_Z=\sigma$ for
all but finitely many prime divisors $Z$. We say that $\sigma$ is the
tail cone of $\mathcal{D}$. For any $u\in\sigma^\vee\cap M$ we can evaluate
$\mathcal{D}$ in $u$ by letting $\mathcal{D}(u)$ be the $\mathbb{Q}$-divisor in $Y$
given by
$$\mathcal{D}(u)=\sum_{Z\subseteq Y} \min\langle u,\Delta_Z\rangle \cdot
Z\,.$$

A polyhedral divisor $\mathcal{D}$ on $Y$ is called a p-divisor if  $\mathcal{D}(u)$
is semiample for all $u\in \sigma^\vee\cap M$ and big for all $u\in M$ contained in the relative interior of $\sigma^\vee$. The AH-presentation of affine $T$-varieties is given in the following theorem \cite{AH06}.

\begin{theorem} \label{T-vars-affine}
To any p-divisor $\mathcal{D}$ on a normal semiprojective variety $Y$ one can associate an affine $T$-variety of complexity $\dim Y$ given by $X(Y,\mathcal{D})=\operatorname{Spec} A(Y,\mathcal{D})$, where  $$A(Y,\mathcal{D})=\bigoplus_{u\in\sigma^{\vee}\cap M} A_u\chi^u,\quad\mbox{and}\quad A_u=H^0(Y,\mathcal{O}(\mathcal{D}(u))\,.$$ %
Conversely, any affine $T$-variety is equivariantly isomorphic to $X(Y,\mathcal{D})$ for some p-divisor $\mathcal{D}$ on some normal semiprojective variety $Y$.
\end{theorem}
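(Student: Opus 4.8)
The plan is to run the Altmann--Hausen machinery of \Cref{T-vars-affine} and then show that, for the varieties in question, the combinatorial datum $(Y,\mathcal{D})$ is rigid enough to be reconstructed from the two pieces of information $(a)$ and $(b)$. Since $X$ is affine of complexity two, \Cref{T-vars-affine} provides an equivariant isomorphism $X\cong X(Y,\mathcal{D})$ with $\dim Y=2$, and the degree-zero part $A_0=H^0(Y,\mathcal{O}_Y)$ recovers the algebraic quotient $X/\!/T=\operatorname{Spec}A_0$, which by hypothesis is the cyclic quotient surface singularity $\mathbb{A}^2/\!/\mu$. Because $Y$ is normal, semiprojective, and projective over $Y_0=\mathbb{A}^2/\!/\mu$, and because cyclic quotient singularities are toric, I would first show that one may choose $Y$ to be a smooth toric surface obtained from $\mathbb{A}^2/\!/\mu$ by a sequence of toric blow-ups --- the ``toric blow-up of a toric surface'' of the abstract --- so that its boundary $Y\setminus T_Y$ is a finite union of rational curves determined by the fan of $Y$.

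Next I would read the linear datum $(a)$ off the p-divisor. As $x_0$ is the unique fixed point and $X$ is smooth there, the induced $T$-action on $T_{x_0}X\cong\mathbb{A}^n$ is linear with a well-defined weight collection $w_1,\dots,w_n\in M$, and these weights pin down both the tail cone $\sigma$ of $\mathcal{D}$ (as the dual of the cone they generate) and the asymptotic behaviour of $\mathcal{D}(u)=\sum_Z\min\langle u,\Delta_Z\rangle\,Z$ along the toric boundary of $Y$; equivalently, $(a)$ fixes $\sigma$ together with the vertices of the polyhedral coefficients $\Delta_Z$ supported on the torus-invariant divisors of $Y$. This recovers the ``toric skeleton'' of $\mathcal{D}$, and what remains undetermined is the part of $\mathcal{D}$ supported on the \emph{non}-toric prime divisors of $Y$. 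Using the equivalences built into the AH-presentation (normalising $\mathcal{D}$ by principal divisors and exploiting the freedom in the choice of $Y$), I would reduce to the case where these extra coefficients are carried by the strict transforms of finitely many curves whose images in $Y_0=\mathbb{A}^2/\!/\mu$ lift, on the $\mu$-cover $\mathbb{A}^2$, to $\mu$-invariant curves through the origin.

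The heart of the argument is to convert the remaining geometric hypotheses on $X$ into the precise statement $(b)$, in both directions. Smoothness of $X$ forces each such non-toric divisor to be smooth and to carry a standard integral polyhedral coefficient, while contractibility forces there to be exactly two of them, each a smooth contractible curve --- hence isomorphic to $\mathbb{A}^1$ --- and meeting with simple normal crossings; conversely, any two $\mu$-invariant curves $C_1,C_2\subset\mathbb{A}^2$ of this type, together with the tangent datum $(a)$, assemble into a bona fide p-divisor whose associated variety $X(Y,\mathcal{D})$ is smooth, contractible, has $x_0$ as its only fixed point, and has quotient $\mathbb{A}^2/\!/\mu$. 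To establish both directions I would analyse the quotient morphism $X\to X/\!/T=\mathbb{A}^2/\!/\mu$, whose general fibres carry a dense $T$-orbit, relate the homotopy type of $X$ to that of the base stratified by the toric boundary and by $C_1\cup C_2$, and compute $H^{\bullet}(X)$ by a Mayer--Vietoris / Leray argument, showing that acyclicity is equivalent to each $C_i$ being contractible and the pair crossing transversally.

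I expect the main obstacle to lie in precisely this last step: the passage from the combinatorial datum $(Y,\mathcal{D})$ to the homotopy type of $X(Y,\mathcal{D})$ is not formal, so controlling the topology of $X$ --- proving that contractibility is equivalent to ``$C_i\cong\mathbb{A}^1$ and $C_1,C_2$ cross with simple normal crossings'' and that no more than two non-toric curves can occur --- requires a careful stratified analysis of the quotient map. This is also where the argument must genuinely extend the computations behind \cite[Theorem 4.1]{KR97} and \cite[Theorem 8]{P18} that \Cref{thm:A-smooth-contractible} generalises.
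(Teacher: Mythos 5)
Your proposal does not prove the statement in question. The statement here is the Altmann--Hausen structure theorem itself: the construction of an affine $T$-variety $X(Y,\mathcal{D})=\operatorname{Spec}A(Y,\mathcal{D})$ from a p-divisor, and the converse that every affine $T$-variety arises this way. Your very first step invokes exactly this theorem (``\cref{T-vars-affine} provides an equivariant isomorphism $X\cong X(Y,\mathcal{D})$''), which is circular as a proof of that theorem, and all of the subsequent content of your sketch is aimed at a different result, namely the paper's classification theorem for smooth contractible complexity-two $T$-varieties (\cref{thm:A-smooth-contractible}, in its precise form \cref{thm:Main}). In other words, you have written a (reasonable-looking) outline for the main theorem of the paper while treating the statement you were asked to prove as a black box.

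A proof of the actual statement would require entirely different content, none of which appears in your sketch. In the direct direction one must show that $A(Y,\mathcal{D})=\bigoplus_{u\in\sigma^\vee\cap M}H^0(Y,\mathcal{O}(\mathcal{D}(u)))\chi^u$ is closed under multiplication (this uses the superadditivity $\mathcal{D}(u)+\mathcal{D}(u')\leq\mathcal{D}(u+u')$ coming from taking minima over the polyhedra $\Delta_Z$), that it is a finitely generated normal domain --- this is precisely where semiampleness of $\mathcal{D}(u)$ for all $u\in\sigma^\vee\cap M$ and bigness for $u$ in the relative interior of $\sigma^\vee$ enter --- and that the induced $T$-action is faithful of complexity $\dim Y$. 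In the converse direction one must, starting from an arbitrary affine $T$-variety $X=\operatorname{Spec}A$ with its $M$-grading, construct the semiprojective base $Y$ (in Altmann--Hausen this is done via GIT quotients of $X$ glued into a limit over the chambers of the weight cone) and then produce the polyhedral coefficients $\Delta_Z$ so that $A_u\cong H^0(Y,\mathcal{O}(\mathcal{D}(u)))$ compatibly with multiplication. Note that the paper itself does not reprove this result either: it cites it from \cite{AH06}, so there is no internal proof to compare against; but in any case your attempt establishes (at best) \cref{thm:Main}, not \cref{T-vars-affine}.
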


In this paper, we will only be interested in \emph{fully hyperbolic} torus actions on $X=X(Y,\mathcal{D})$. This corresponds to the case where $\sigma=\{0\}\in N_\mathbb{R}$ and there is a projective birational morphism $q\colon Y\to Y_0$ where $Y_0= \operatorname{Spec} A_0$ is the spectrum of the ring of $T$-invariant regular functions on $X$. Moreover, we denote by $\pi\colon X\to Y_0$ the natural map comming from the embedding $A_0\hookrightarrow A(Y,\mathcal{D})$.

Given a $T$-variety $X$, a method to determine a p-divisor $\mathcal{D}$ such
that $X=X(Y,\mathcal{D})$ is given in \cite[section 11]{AH06}. The computation is reduced to the toric case by considering a $T$-equivariant embedding $X\hookrightarrow\mathbb{A}^{n}$, where $\mathbb{A}^{n}$ is consider with its unique faithful $\mathbb{G}_m^n$-action up to conjugation. More precisely, realizing $T$ as a subtorus $\mathbb{G}_m^n$, we obtain an inclusion of lattices providing the following exact sequence.
\begin{align} \label{eq:AH}    
\xymatrix{0\ar[r] & N\simeq\mathbb{Z}^{k}\ar[r]_{F} & N'\simeq\mathbb{Z}^{n}\ar[r]_{P}\ar@/_{1pc}/[l]_{s} & N'/N\ar[r] & 0}
,
\end{align} 
where $F$ is the weight matrix is given by the induced action of $T$ on $\mathbb{A}^{n}$
and $s$ is a section of $F$. The quotient toric variety is determined by coarsest fan refining all the first integral vectors
$v_{i}$ of the unidimensional cone generated by the $i$-th column vector of $P$ considered as rays in a $\mathbb{Z}^{n}$ lattice, and each $v_{i}$ correspond to a divisor. The support of $D_{i}$ is the intersection between $X$ and the divisor corresponding to $v_{i}$. The tail cone is $\sigma:=s(\mathbb{Q}_{\geq0}^{n}\cap F(\mathbb{Q}))$, and the polytopes are $\Delta_{i}=s(\mathbb{R}_{\geq0}^{n}\cap P^{-1}(v_{i}))$, see \cite[section 11]{AH06} for further details.

\section{Proof of \cref{thm:A-smooth-contractible}}

We first compute the AH-presentation of a fully hyperbolic linear $T$-action on the affine space. 

\begin{proposition} \label{prop:affine}
Let $X=\mathbb{A}^n$ be endowed with a linear fully hyperbolic $T$-action $\alpha$ of complexity 2 with 2-dimensional algebraic quotient with weight matrix $F$. Then $X$ is equivariantly isomorphic to $X(Y,\mathcal{D})$ where
$Y$ is a blow-up of $\mathbb{A}^{2}/\!/\mu$ and $$\mathcal{D}=\Delta_{1}\otimes H_{1}+\Delta_{2}\otimes H_{2}+\stackrel[i=3]{n}{\sum}\Delta_{i}\otimes E_{i}\,,$$
where $H_1$ and $H_2$ are the strict transform of the images of the coordinate axes of $\mathbb{A}^2$ in $Y$, $E_i$ are the exceptional divisors of $Y$, and the polyhedral coefficients $\Delta_i$ are obtained from $F$ using \eqref{eq:AH}.
\end{proposition}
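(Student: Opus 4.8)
The plan is to apply the general recipe for computing an AH-presentation from a toric embedding, as recalled at the end of Section~\ref{sec:comb-des-affine} following \cite[Section~11]{AH06}, and to identify explicitly the combinatorial data it produces in the case at hand. Since the $T$-action is linear on $X=\mathbb{A}^n$, the natural $\mathbb{G}_m^n$-action making the coordinate functions eigenvectors is a faithful toric structure extending the $T$-action, so the equivariant embedding $X\hookrightarrow\mathbb{A}^n$ required by the recipe is just the identity, and the weight matrix $F$ of the $T$-action on the coordinates is exactly the inclusion map $N\simeq\mathbb{Z}^2\hookrightarrow N'\simeq\mathbb{Z}^n$ appearing in the exact sequence~\eqref{eq:AH}. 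The first step, then, is to set up this sequence with the given $F$, choose a section $s$, and let $P\colon N'\to N'/N$ be the projection.

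**Next** I would analyze the fan produced by the recipe. The columns of $P$ are the images of the standard basis vectors $e_1,\dots,e_n$ of $N'$ in the two-dimensional quotient $N'/N$; the rays they generate, together with the coarsest fan refining all of them, describe the downstairs variety $Y$. The key observation is that $P$ factors the standard $\mathbb{G}_m^n$-quotient presentation of $\mathbb{A}^2/\!/\mu$: the first two columns $P(e_1),P(e_2)$ span the cone of the quotient surface and correspond to the images of the two coordinate axes of $\mathbb{A}^2$, while the remaining columns $P(e_3),\dots,P(e_n)$ are interior rays forcing subdivisions, hence blow-ups. This is what identifies $Y$ as a toric blow-up of $\mathbb{A}^2/\!/\mu$, with $H_1,H_2$ the strict transforms of the two axis-images and $E_3,\dots,E_n$ the exceptional divisors, each $v_i=P(e_i)$ contributing the divisor $D_i$ in the recipe. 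Since the tail cone is $\sigma=s(\mathbb{Q}_{\geq 0}^n\cap F(\mathbb{Q}))$ and the action is fully hyperbolic, one checks $\sigma=\{0\}$, consistent with Section~\ref{sec:comb-des-affine}.

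**Finally** I would read off the polyhedral coefficients directly from the formula $\Delta_i=s(\mathbb{R}_{\geq 0}^n\cap P^{-1}(v_i))$ given in the recipe, so that the associated p-divisor is
\begin{equation*}
\mathcal{D}=\sum_{i=1}^n \Delta_i\otimes D_i
=\Delta_1\otimes H_1+\Delta_2\otimes H_2+\sum_{i=3}^n \Delta_i\otimes E_i,
\end{equation*}
matching the claimed expression once the divisors are labeled as above. It then remains to invoke \cref{T-vars-affine} to conclude that $X(Y,\mathcal{D})$ is equivariantly isomorphic to $X=\mathbb{A}^n$ with its $T$-action.

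**The main obstacle** I expect is the precise bookkeeping of the section $s$ and the verification that the combinatorial recipe genuinely produces a blow-up of $\mathbb{A}^2/\!/\mu$ rather than some more general semiprojective surface: concretely, one must show that $P(e_1)$ and $P(e_2)$ generate the cone whose affine toric variety is $\mathbb{A}^2/\!/\mu$, and that every additional ray $P(e_i)$ for $i\geq 3$ lies in the interior and thus corresponds to an honest exceptional divisor under the projective birational morphism $q\colon Y\to Y_0$. This requires understanding how the lattice $N'/N$ and the rays $v_i$ encode the finite cyclic group $\mu$, and checking that the semiprojective structure and the map to $Y_0=\operatorname{Spec}A_0$ are as expected; the rest of the argument is a direct substitution into the formulas of~\eqref{eq:AH}.
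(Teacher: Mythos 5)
Your proposal follows the same route as the paper's own proof: both simply unwind the recipe of \cite[Section~11]{AH06} recalled at the end of \cref{sec:comb-des-affine} --- the linearity of the action makes the identity embedding $X\hookrightarrow\mathbb{A}^n$ equivariant, the sequence \eqref{eq:AH} produces $P$, the fan of $Y$ is the coarsest fan refining the rays through the columns of $P$, the coefficients are $\Delta_i=s(\mathbb{R}_{\geq0}^n\cap P^{-1}(v_i))$, and \cref{T-vars-affine} gives the equivariant isomorphism. Two points, however, need repair.

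First, a bookkeeping slip: for a faithful action of complexity two, $\dim T=n-2$, so in \eqref{eq:AH} the weight matrix is the inclusion $N\simeq\mathbb{Z}^{n-2}\hookrightarrow N'\simeq\mathbb{Z}^n$; it is the quotient $N'/N$, not $N$, that has rank two. Your subsequent reasoning correctly treats $N'/N$ as two-dimensional, so this is harmless, but as written your setup is consistent only when $n=4$. Second, and more seriously, the claim you isolate as the ``main obstacle'' --- that every ray $P(e_i)$, $i\geq3$, lies in the interior of the cone spanned by $P(e_1),P(e_2)$ and hence yields an honest exceptional divisor --- is false, so the verification you plan would fail. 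The paper's own \cref{example-of-blox-up}~$(i)$ is a counterexample: there the columns of $P$ are $(1,0),(1,0),(0,1),(0,1)$, all of them lie on the two boundary rays, and $Y=\mathbb{A}^2=Y_0$ with no exceptional divisor at all. What is true, and all the argument needs, is a dichotomy: since the algebraic quotient is two-dimensional, the image cone $P(\mathbb{Q}_{\geq0}^n)$ is a strongly convex two-dimensional cone (its dual is the weight cone of the invariant ring), so each column $v_i$ either lies on one of the two extremal rays --- contributing to the coefficient of $H_1$ or $H_2$ --- or lies in the interior, producing an exceptional divisor of the toric morphism $Y\to Y_0\simeq\mathbb{A}^{2}/\!/\mu$. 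In particular distinct indices may give the same divisor, and the number of exceptional divisors is only \emph{bounded} by $n-2$, which is exactly how the paper phrases its conclusion. With this dichotomy substituted for your interiority claim, your argument goes through and coincides with the paper's proof.
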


\begin{proof}
the AH-presentation of $\mathbb{A}^{n}$ endowed with linear fully hyperbolic $T$-action via the exact sequence given in \eqref{eq:AH}. The AH quotient $Y$ is a toric surface since the $T$-action is linear. Moreover, the rays of the fan $\Sigma$ of $Y$ are given by the column vectors of the matrix $P$. Since the algebraic quotient is $2$-dimensional, we have that $Y$ is a blow-up of $\mathbb{A}^{2}/\!/\mu$. The number of rays of $\Sigma$ is bounded by $n$ the number of columns of the matrix $P$. Hence, the number of exceptional divisors is bounded by $n-2$.\\
\end{proof}

The number of blow-up can be arbitrary between $0$ and $n-2$ as illustrated by the following examples.

\begin{example} \label{example-of-blox-up}
\begin{enumerate}
\item[$(i)$]   Let $X=\mathbb{A}^{4}$ endowed with the $T$-action given by the weight matrix 
$$F=\left(\begin{array}{cc}
1 &  0\\
-1 & 0\\
0 & 1\\
0 & -1\\
\end{array}\right)\quad\mbox{with}\quad P=\left(\begin{array}{cccc}
1 & 1 & 0 & 0\\
0 & 0 & 1 & 1
\end{array}\right).$$ 
Hence, $Y$ is $\mathbb{A}^{2}$.

\item[$(ii)$]  Let $X=\mathbb{A}^{n}$ endowed with the $T$-action given by the weight matrix  $$F=\left(\begin{array}{cccc}
1 & \cdots & \cdots & 1 \\
1 & \cdots & \cdots & 1 \\
 -1 & 0 & \cdots & 0\\
 0 & -1 & \ddots & \vdots\\
 \vdots & \ddots & \ddots & 0\\
 0 & \cdots & 0 & -1
\end{array}\right)\quad\mbox{with}\quad P=\left(\begin{array}{ccccc}
1 & 0 & 1 & \cdots & 1\\
0 & 1 & 1 & \cdots & 1
\end{array}\right).$$ 
Hence, $Y$ is $\mathbb{A}^{2}$ blown-up at one point.

\item[$(iii)$] Let $X=\mathbb{A}^{n}$ endowed with the $\mathbb{T}$-action given by the weight matrix 
$$F=\left(\begin{array}{cccc}
n-2 & n-1 & \cdots & 1 \\
1 & \cdots & \cdots & 1 \\
 -1 & 0 & \cdots & 0\\
 0 & -1 & \ddots & \vdots\\
 \vdots & \ddots & \ddots & 0\\
 0 & \cdots & 0 & -1
\end{array}\right)\quad\mbox{with}\quad P=\left(\begin{array}{cccccc}
1 & 0 & n-2 & n-1 &\cdots & 1\\
0 & 1 & 1 & 1 & \cdots & 1
\end{array}\right).$$ 
Hence, $Y$ is a blow-up of $\mathbb{A}^{2}$ having 
$n-2$ exceptional divisors.

\end{enumerate}
\end{example}

In the proof of \cref{thm:A-smooth-contractible}, we will apply the following result proven earlier by the same authors of this paper. 

\begin{proposition}[{\cite[Theorem~7]{LP17}}] \label{prop:smooth}
Let $X = X(Y,\mathcal{D})$ be an affine $T$-variety of dimension $n$. Assume that $(Y,\mathcal{D})$ is minimal. Then, $X$ is smooth if and only if for every point $y \in Y_0$ there exists a neighborhood $U \subseteq Y_0$, a linear $T$-action on $\mathbb{A}^n$ given by the combinatorial data $(Y',\mathcal{D}')$ with $\mathcal{D}'$ minimal, an algebraic variety $V$, and étale morphisms $\alpha : V \to Y$ and $\beta : V \to Y'$ such that $\rho^{-1}(U) \subseteq \alpha(V)$ and $\alpha^*(\mathcal{D}) = \beta^*(\mathcal{D}')$. In particular, the combinatorial data $(Y,\mathcal{D})$ is locally isomorphic in the étale topology to the combinatorial data of the affine space endowed with a linear $T$-action.
\end{proposition}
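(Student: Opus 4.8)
The plan is to treat smoothness as an étale-local condition and to transport it across the Altmann--Hausen construction. Two ingredients drive both implications. The first is that regularity both lifts and descends along étale surjections: a variety étale over a smooth one is smooth, and conversely, if $W\to X$ is étale and surjective with $W$ smooth, then $X$ is smooth, since regularity descends along faithfully flat morphisms. The second is the functoriality of the AH construction under étale base change of the base, which I take from \cite{AH06}: an étale morphism $\psi\colon V\to Y$ yields a pullback $\psi^{*}\mathcal{D}$ that is again a minimal p-divisor, and the induced $T$-equivariant morphism $X(V,\psi^{*}\mathcal{D})\to X(Y,\mathcal{D})$ is étale and compatible with the structure maps to the respective affine quotients.

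For the ``if'' implication, assume the stated data with $\alpha^{*}\mathcal{D}=\beta^{*}\mathcal{D}'$. Then the single $T$-variety $W:=X(V,\alpha^{*}\mathcal{D})=X(V,\beta^{*}\mathcal{D}')$ maps étale, by the functoriality above, both to $X=X(Y,\mathcal{D})$ and to the affine space $\mathbb{A}^{n}=X(Y',\mathcal{D}')$ carrying the linear action. As $\mathbb{A}^{n}$ is smooth and $W\to\mathbb{A}^{n}$ is étale, $W$ is smooth; and as $W\to X$ is étale with image containing $\pi^{-1}(U)$---this is exactly where the hypothesis $\rho^{-1}(U)\subseteq\alpha(V)$ enters, with $\rho\colon Y\to Y_{0}$ the structure morphism---regularity descends onto $\pi^{-1}(U)$. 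Letting $U$ range over a cover of $Y_{0}$ then gives that $X$ is smooth.

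For the ``only if'' implication, assume $X$ smooth. The geometric input is Luna's étale slice theorem for the linearly reductive torus $T$: around the closed orbit through a point $x\in X$ lying over a given $y\in Y_{0}$, with isotropy $T_{x}$, there is a $T_{x}$-invariant smooth slice $S$ and a $T$-equivariant étale morphism from a saturated neighborhood of $T\cdot x$ onto the associated bundle $T\times_{T_{x}}S$, where $T_{x}$ acts linearly on $S$. Since $X$ is smooth, $S$ is a genuine $T_{x}$-representation, so $T\times_{T_{x}}S$ is a smooth affine $T$-variety which, after trivializing the bundle over an étale cover of the quotient torus $T/T_{x}$, is $T$-equivariantly identified---étale-locally over its quotient---with an open part of $\mathbb{A}^{n}$ carrying a linear $T$-action; this supplies the linear model $(Y',\mathcal{D}')$. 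Passing to affine quotients, the slice theorem then produces an étale neighborhood $U\subseteq Y_{0}$ of $y$ over which $X$ and the linear model are étale-equivalent as $T$-varieties.

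The remaining, and principal, step is to convert this equivariant étale equivalence into the claimed identity of combinatorial data. I would take $V$ to be a suitable component of the fiber product of the two AH bases over the common étale neighborhood $U$; its projections furnish étale morphisms $\alpha\colon V\to Y$ and $\beta\colon V\to Y'$, and the inclusion $\rho^{-1}(U)\subseteq\alpha(V)$ is arranged by construction. The equality $\alpha^{*}\mathcal{D}=\beta^{*}\mathcal{D}'$ should then follow from the uniqueness clause of \cref{T-vars-affine} together with minimality of $(Y,\mathcal{D})$ and $(Y',\mathcal{D}')$: once both presentations are minimal, an equivariant isomorphism of $T$-varieties over a fixed torus is induced by an isomorphism of the underlying combinatorial data, forcing the two pulled-back p-divisors to coincide on $V$. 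I expect this translation to be the main obstacle. The delicate points are establishing the functoriality of p-divisor pullback under non-proper étale base change together with its compatibility with minimality, and controlling the polyhedral coefficients on every prime divisor---especially the strict transforms $H_{i}$ and the exceptional divisors $E_{i}$ near the locus where several rays meet---so that $\alpha^{*}\mathcal{D}=\beta^{*}\mathcal{D}'$ holds as a genuine identity of polyhedral divisors, and not merely up to the equivalence that leaves the associated $T$-variety unchanged.
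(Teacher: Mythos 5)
The paper does not prove this proposition at all: it is quoted verbatim from the authors' earlier work \cite[Theorem~7]{LP17}, so there is no internal proof to compare yours against. Judged on its own terms, your outline follows the same strategy as the cited source --- \'etale descent of regularity for the ``if'' direction, Luna's slice theorem for the reductive group $T$ for the ``only if'' direction, and the Altmann--Hausen functoriality of p-divisors under \'etale base change to glue the two --- so the architecture is the right one.

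That said, as a proof it is incomplete in two places. First, you assert that an \'etale morphism $\psi\colon V\to Y$ induces an \'etale morphism $X(V,\psi^{*}\mathcal{D})\to X(Y,\mathcal{D})$; this is not automatic, because $X(Y,\mathcal{D})$ is the spectrum of \emph{global} sections and is obtained from the relative spectrum $\widetilde{X}(Y,\mathcal{D})$ over $Y$ by a contraction $\widetilde{X}\to X$ living over $Y\to Y_{0}$. The \'etale base change behaves well for $\widetilde{X}$, but one must check compatibility with the contraction (this is exactly why the statement is phrased over neighborhoods $U\subseteq Y_{0}$ and with the condition $\rho^{-1}(U)\subseteq\alpha(V)$, and why minimality of $(Y,\mathcal{D})$ is assumed); you use these hypotheses but do not explain what they are guarding against. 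Second, and more importantly, you explicitly defer the decisive step of the ``only if'' direction --- converting the $T$-equivariant \'etale equivalence furnished by the slice theorem into the identity $\alpha^{*}\mathcal{D}=\beta^{*}\mathcal{D}'$ of polyhedral divisors --- and the uniqueness clause of \cref{T-vars-affine} alone does not deliver it: Altmann--Hausen presentations are unique only up to the choice of semiprojective base and up to modifications of $\mathcal{D}$ that do not change $A(Y,\mathcal{D})$, so one needs the full minimality machinery of \cite{LP17} to pin down the coefficients divisor by divisor. As submitted, the proposal is a correct roadmap to the argument of \cite{LP17} rather than a proof.
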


To prove \cref{thm:A-smooth-contractible}, we need the following lemma.

\begin{lemma} \label{lem:fixed-points}
Let $X=X(Y,\mathcal{D})$ a fully hyperbolic action with $\mathcal{D}=\sum_i \Delta_i\otimes D_i$. Let also $T'\subset T$ be a 1-dimensional subtorus corresponding to a 1-dimensional subspace $L$ of $N_\mathbb{R}$. Then $x\in X$ is fixed under the $T'$-action if and only $q_0(x)\in \pi(D_i)$ and $(L+p)\cap \Delta_i$ is an interval of positive length for some $p\in N_\mathbb{R}$.
\end{lemma}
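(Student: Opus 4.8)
The plan is to translate $T'$-invariance into the $M$-grading of $A=A(Y,\mathcal{D})$ and then read the answer off the orbit decomposition of the fibres of $\pi$; throughout I read $q_0(x)$ as the image $\pi(x)\in Y_0$ and $\pi(D_i)$ as the image $q(D_i)\subseteq Y_0$ of the prime divisor $D_i\subseteq Y$. Let $v$ be a primitive generator of $L\cap N$, so that $T'=v(\mathbb{C}^*)$. Since the $T$-action is encoded by the grading $A=\bigoplus_{u}A_u\chi^u$, a closed point $x$, regarded as an evaluation $A\to\mathbb{C}$, is $T'$-fixed if and only if every homogeneous function of nonzero $v$-degree $\langle u,v\rangle\neq 0$ vanishes at $x$. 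The first step is to record this as $X^{T'}=V(I_v)$, where $I_v$ is the ideal generated by all $A_u\chi^u$ with $\langle u,v\rangle\neq 0$; in particular $X^{T'}$ is $T$-stable, so it suffices to analyse it fibre by fibre over $Y_0$.

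Next I would describe these fibres combinatorially. For $y\in Y$ put $\mathcal{D}_y=\sum_{D_i\ni y}\Delta_i$, which is a polytope because the tail cone is $\sigma=\{0\}$. Using the AH-presentation of \cref{T-vars-affine} together with the orbit decomposition for affine $T$-varieties, the $T$-orbits lying over $y$ are in bijection with the faces $\mathcal{F}$ of $\mathcal{D}_y$, and the isotropy group of the orbit $O_\mathcal{F}$ is the subtorus whose cocharacter space is $\operatorname{lin}(\mathcal{F})$, the linear span of the differences of points of $\mathcal{F}$; thus $\dim O_\mathcal{F}=\dim T-\dim\mathcal{F}$, the deepest face $\mathcal{F}=\mathcal{D}_y$ yielding the most degenerate orbit. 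The key step is then the observation that $O_\mathcal{F}$ is pointwise fixed by $T'$ precisely when $T'$ lies in its isotropy, that is, when $L\subseteq\operatorname{lin}(\mathcal{F})$.

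With this the two implications become combinatorial. A point $x$ with $q_0(x)=y_0$ has an orbit corresponding to a face $\mathcal{F}$ of $\mathcal{D}_y$ for some $y\in q^{-1}(y_0)$, and by the previous step $x$ is $T'$-fixed if and only if $L\subseteq\operatorname{lin}(\mathcal{F})$; such a face is present exactly when $L\subseteq\operatorname{lin}(\mathcal{D}_y)=\sum_{D_i\ni y}\operatorname{lin}(\Delta_i)$. The final step is to rephrase this in the divisor language of the statement. Since $\operatorname{lin}(\Delta_i)\supseteq L$ means precisely that $\Delta_i$ contains a segment parallel to $L$, i.e. that $(L+p)\cap\Delta_i$ is an interval of positive length for some $p\in N_\mathbb{R}$, the criterion says that $T'$-fixed points lie over exactly those divisors $q(D_i)$ whose coefficient $\Delta_i$ extends in the direction of $L$; at a point where a single divisor $D_i$ passes — the situation one arranges by resolving the crossings in \cref{thm:A-smooth-contractible} — this is exactly the stated condition, while at a crossing one works with the Minkowski sum $\mathcal{D}_y$.

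The main obstacle is the second step: extracting the orbit–face correspondence and, above all, the isotropy computation $\operatorname{lin}(\mathcal{F})$ directly from the p-divisor in the fully hyperbolic case. The cleanest route is to localise at $y_0$ and reduce to a toric model through the downgrade exact sequence \eqref{eq:AH}, exactly as in the computation behind \cref{prop:affine}: there the isotropy of each orbit is read from the cones of the ambient fan, and transporting the toric orbit–cone correspondence through the identification $\Delta_i=s(\mathbb{R}_{\geq0}^n\cap P^{-1}(v_i))$ yields the required statement. The delicate bookkeeping is to track how several prime divisors meeting at a point of $Y$ contribute their coefficients as a Minkowski sum to $\mathcal{D}_y$, which is what mediates between the local toric picture and the per-divisor formulation of the lemma.
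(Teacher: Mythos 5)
Your route is genuinely different from the paper's. The paper first specializes to $\dim T'=\dim T=1$, where $\mathcal{D}=\sum_i[a_i,b_i]\otimes D_i$, and simply cites the known description of fixed points for hyperbolic $\mathbb{C}^*$-actions (\cite[Theorem 4.18]{FZ03}, \cite[Proof of Proposition 6]{P18}): positive length of $[a_i,b_i]$ gives infinite isotropy over $\pi(D_i)$, length zero gives finite isotropy. The general case is then handled by embedding $X$ equivariantly in $\mathbb{A}^n$ and running the downgrading sequence \eqref{eq:AH} for both $T$ and $T'$. You instead argue directly on $X(Y,\mathcal{D})$ via the grading and an orbit--face correspondence for the fibre polytopes $\mathcal{D}_y=\sum_{D_i\ni y}\Delta_i$, with isotropy read off from $\operatorname{lin}(\mathcal{F})$. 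Your approach is more self-contained and more informative (it sees all orbits, not just the divisorial strata), whereas the paper's buys brevity by outsourcing the complexity-one case to the literature.

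However, there are two genuine gaps. First, the pivotal ingredient --- the bijection between $T$-orbits over $y$ and faces of $\mathcal{D}_y$, together with the isotropy computation $\operatorname{lin}(\mathcal{F})$ --- is asserted, not proved; you yourself flag it as the ``main obstacle'' and only sketch how one might extract it from the toric downgrade. As written, the proof rests on an unestablished lemma. Second, the criterion you actually derive is $L\subseteq\operatorname{lin}(\mathcal{D}_y)=\sum_{D_i\ni y}\operatorname{lin}(\Delta_i)$, which at a point $y$ lying on several divisors is strictly weaker than the per-divisor condition of the statement (that some single $\Delta_i$ contain a segment parallel to $L$). These do not coincide: in \cref{ex:10}, with $\Delta_1=[-1,0]\times 0$ and $\Delta_2=0\times[-1,0]$, the diagonal subtorus $L=\mathbb{R}(1,1)$ fixes the point of $X$ over the origin (all four coordinates have nonzero $L$-weight), yet no translate of $L$ meets either $\Delta_i$ in a positive-length interval. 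So your Minkowski-sum criterion is the correct one and the identification with the stated condition in your last step fails at crossings; you notice the mismatch but leave it unresolved. (This in fact exposes an imprecision in the lemma as stated at points of $\bigcap_i D_i$; the paper's proof, and its later use in Step one of \cref{thm:Main}, only ever invoke the criterion at general points of a single $D_i$, where the two formulations agree.) To complete your argument you would need to either prove the orbit--face correspondence or reduce, as the paper does, to the one-divisor, one-dimensional-torus case where the coefficient is a single interval.
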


\begin{proof}
Assume first that $T'=T$ so that $\dim T=1$. The polyhedral divisor $\mathcal{D}$ is then given by
\[
\mathcal{D} = \sum_{i=1}^n [a_i, b_i] \otimes D_i\,.
\]
In this case the result follows by \cite[Proof of Proposition~6]{P18}, see also \cite[Theorem 4.18]{FZ03}. Indeed, we have that:
\begin{itemize}
    \item[$(i)$] If $[a_i, b_i]$ has length 0, then the points $x\in X$ such that $q_0(x)\in \pi(D_i)$ have finite isotropy subgroup and so are not fixed points.
    \item[$(ii)$] If $[a_i, b_i]$ has positive length, then the points $x\in X$ such that $q_0(x)\in \pi(D_i)$ have  infinite isotropy subgroup and so are fixed points.
\end{itemize}
Now, in the general case where $T'\subsetneq T$, the result follows simply embedding $X$ equivariantly in $\mathbb{A}^n$ and applying \eqref{eq:AH} to both tori $T$ and $T'$.
\end{proof}

We now state a stronger version of \cref{thm:A-smooth-contractible} in the introduction whose statements was postponed since it requires the details of the AH-presentation. We will prove \cref{thm:Main} and \cref{thm:A-smooth-contractible} will be a direct  consequence.

\begin{theorem}  \label{thm:Main} 
Let $X$ be a smooth, contractible affine variety $X$ with a faithful $T$-action of complexity two, a unique fixed point $x_0$, and an algebraic quotient isomorphic to $\mathbb{A}^{2}/\!/\mu$, where $\mu$ is a finite cyclic group. Then $X=X(Y,\mathcal{D})$, with
$$\mathcal{D}=\Delta_{1}\otimes D_{1}+\Delta_{2}\otimes D_{2}+\stackrel[i=3]{n}{\sum}\Delta_{i}\otimes E_{i}, \quad \mbox{where}$$ 
\begin{enumerate}
\item[$(a)$] The coefficients $\Delta_i$ are the coefficients obtained in \cref{prop:affine} for the linear linear $T$-action on the tangent space $T_{x_0}X \simeq \mathbb{A}^{n}$ at the fixed point $x_0$. 
\item[$(b)$] The divisors $D_1$ and $D_2$ are strict transforms of two curves $C_1$ and $C_2$ in $\mathbb{A}^2$, respectively. The curves $C_1$ and $C_2$ are invariant under $\mu$, each containing the origin and isomorphic to $\mathbb{A}^1$, intersect with only simple normal crossings.
\end{enumerate}
\end{theorem}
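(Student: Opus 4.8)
The plan is to run the Altmann--Hausen machinery of \cref{sec:comb-des-affine} and then feed the smoothness criterion of \cref{prop:smooth} and the contractibility hypothesis into it. By \cref{T-vars-affine} we may write $X=X(Y,\mathcal{D})$; since the action is fully hyperbolic there is a projective birational morphism $q\colon Y\to Y_0=\mathbb{A}^2/\!/\mu$, and we set $\mathcal{D}=\sum_i\Delta_i\otimes D_i$. I would first separate the support of $\mathcal{D}$ into the prime divisors that $q$ does not contract and the $q$-exceptional ones. A non-contracted $D_j$ maps onto a curve $\bar C_j\subset Y_0$; pulling $\bar C_j$ back along the quotient map $\mathbb{A}^2\to Y_0$ produces a curve $C_j\subset\mathbb{A}^2$ of which $D_j$ is the strict transform, and $C_j$ is automatically $\mu$-invariant because it is the preimage of a subvariety of the quotient. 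Thus the shape $\mathcal{D}=\Delta_1\otimes D_1+\Delta_2\otimes D_2+\sum_{i=3}^n\Delta_i\otimes E_i$ reduces to showing that there are exactly two non-contracted divisors, which I would obtain by comparing with the linear model of \cref{prop:affine} (whose only non-contracted divisors are the two $H_k$) through the étale-local statement of \cref{prop:smooth}.

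For part $(a)$, the point is that the coefficients $\Delta_i$ are étale-local data along the divisors. Since $X$ is smooth, \cref{prop:smooth} says that $(Y,\mathcal{D})$ is, étale-locally around each point of $Y_0$, the combinatorial datum of some linear $T$-action; in particular this holds around the image of the unique fixed point $x_0$, where all the relevant divisors meet. The linearized action at a smooth fixed point is the representation of $T$ on the tangent space $T_{x_0}X$ (Luna's étale slice theorem: an étale map is an isomorphism on tangent spaces and $T$-equivariance matches the weights), so the local linear model there has weight matrix equal to that of $T_{x_0}X$. Computing its coefficients through \eqref{eq:AH} is exactly \cref{prop:affine}, and the étale-local identity $\alpha^*\mathcal{D}=\beta^*\mathcal{D}'$ transports these coefficients to the $\Delta_i$ of $X$. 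This proves $(a)$ and simultaneously pins the number of non-contracted divisors to two.

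For part $(b)$, smoothness and contractibility are used separately. The smooth, simple-normal-crossing behaviour of $C_1,C_2$ again follows from \cref{prop:smooth}: near a point of $\bar C_1\cap\bar C_2$ the combinatorial datum is étale-equivalent to that of the linear model, whose non-contracted divisors are the images of the transverse coordinate axes, and the étale equivalence of the divisorial configuration forces $C_1$ and $C_2$ to be smooth and to meet only with simple normal crossings. That each $C_j$ passes through the origin follows from \cref{lem:fixed-points}: the unique fixed point $x_0$ lies over the image of the origin, which is the unique torus-fixed point of the toric surface $Y_0$, and the characterization of fixed points forces $\pi(x_0)\in\bar C_1\cap\bar C_2$, hence $0\in C_1\cap C_2$. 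It remains to show $C_j\cong\mathbb{A}^1$, and here I would invoke contractibility: choosing via \cref{lem:fixed-points} a one-dimensional subtorus $T'$ whose fixed locus contains the part of $X$ over $\bar C_j$, the topological fixed-point theorem for $\mathbb{G}_m$-actions on a $\mathbb{Q}$-acyclic smooth variety (Smith theory / Białynicki-Birula) shows $X^{T'}$ is $\mathbb{Q}$-acyclic; transporting this acyclicity to the smooth affine curve $C_j$ gives $b_1(C_j)=0$, and a smooth affine curve with vanishing first Betti number is isomorphic to $\mathbb{A}^1$.

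The main obstacle is the last step, namely passing from the $\mathbb{Q}$-acyclicity of the fixed locus $X^{T'}$ to that of the base curve $C_j$. The fixed component over $\bar C_j$ is itself a $T/T'$-variety fibering over $\bar C_j$ through $\pi$, so I expect to identify $\bar C_j$ (equivalently $C_j$) as its algebraic quotient and then argue that acyclicity descends along this quotient map, for instance by showing that the fibers are acyclic or that the induced map on rational homology is an isomorphism. Controlling this descent, together with checking that exactly one irreducible fixed component lies over each $\bar C_j$ so that the argument is not diluted by extra components, is where the real work lies; the coefficient computation $(a)$ and the normal-crossing statement, by contrast, are essentially formal consequences of \cref{prop:smooth}.
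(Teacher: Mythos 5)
Your outline captures the formal part of the argument (the SNC statement and the identification of the coefficients via \cref{prop:smooth}), but it has two genuine gaps, both at places where the paper has to use the hypotheses of contractibility and uniqueness of the fixed point rather than smoothness. First, you never rule out additional prime divisors in $\mathcal{D}$ supported away from the image of the fixed point. The étale-local comparison with the linear model of \cref{prop:affine} only controls the divisors passing through the image of $x_0$; a further divisor $D_0$ elsewhere in $Y$ is perfectly compatible with smoothness. The paper's Step one excludes such a $D_0$ by a separate argument: if $\Delta_0$ has non-empty relative interior, \cref{lem:fixed-points} produces a one-dimensional subtorus whose fixed locus would be disconnected (contradicting that a torus fixed locus on a contractible smooth variety is connected and here reduces to $x_0$), and if $\Delta_0$ has empty relative interior it encodes a cyclic cover whose branch divisor would have to contain the fixed point. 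Without some version of this dichotomy your claim that there are ``exactly two non-contracted divisors'' is unsupported.

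Second, your proof that $C_j\cong\mathbb{A}^1$ only makes sense when $\Delta_j$ has non-empty relative interior: by \cref{lem:fixed-points} there is \emph{no} subtorus $T'$ whose fixed locus lies over $D_j$ when $\Delta_j$ is a single point, so the Smith-theory/Białynicki-Birula strategy has nothing to act on in that case. The paper splits Step three accordingly: when $\Delta_j$ is a point it views $X$ as a cyclic cover, invokes Kaliman's contractibility criterion to force the branch curve to be $\mathbb{Z}_k$-acyclic, and concludes by Abhyankar--Moh--Suzuki; when $\Delta_j$ has interior it quotes the result of Kraft--Petrie--Randall that $X/\!/T'\cong\mathbb{A}^1$, which is exactly the descent step you flag as the ``real work'' and leave open. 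So the missing pieces of your sketch are not technical refinements but the two places where the paper actually uses contractibility, and in the point-coefficient case the intended mechanism (cyclic covers rather than fixed loci) is different from the one you propose.
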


\begin{proof}

 The proof will be organized into three steps, each aiming to verify distinct elements of the AH presentation of a $T$-variety. In the first step, we will establish that no prime divisor other than those provided and statement of \cref{thm:Main}, and encoding the étale neighborhood of the fixed point $x_0$ appear in $\mathcal{D}$ with non-trivial coefficient. In the second step, we will demonstrate that the divisor given by $D=D_1+D_2$ is a simple normal crossing (SNC) divisor. Finally, in the third step, we will prove that the irreducible components of the divisor $D$ are isomorphic to the affine line  $\mathbb{A}^1$.

\subsection*{Step one}
By hypothesis, $X$ admits a unique fixed point for the action of $T$, and $X$ is contractible with an unique fixed point $x_0$. We proceed by contradiction assuming that there exists an additional prime divisor $D_0$ in $\mathcal{D}$ with coefficient $\Delta_0$ and assume that $D_0$ does not intersect an étale neighborhood of the image in $Y$ of the fixed point $x_0$.

Assume first that $\Delta_0$ has non empty relative interior and let $L\subset N_\mathbb{R}$ be a 1-dimensional subspace of $N_\mathbb{R}$ such that $L\cap \Delta_0$ is an interval of positive length. Then by \cref{lem:fixed-points}, there are fixed points at every $x\in X$ such that $q(x)=\pi(D_0)$. Since $D_0$ does not intersect an étale neighborhood of the image in $Y$ of the fixed point $x_0$, we obtain that the set of fixed points of $X$ with respect to the 1-dimensional $T'$-action given by $L$ is not connected. This provides a contradiction.

\medskip

Assume now that the coefficient $\Delta_0$ has empty relative interior, then it encodes a cyclic cover $\pi : X \rightarrow X'$ of positive order along a divisor $H$ of $X'$ \cite[Lemma~7]{P15}. For the fixed point to be preserved during the cyclic covering and for the torus action to remain intact, the support of $H$ must contain the fixed point. This leads to a contradiction since $D_0$ does not intersect an étale neighborhood of the image of the fixed point in $Y$.

\subsection*{Step two}  By \cref{prop:smooth}, the curves must be smooth. Indeed, they must be locally isomorphic to toric curves and all toric curves are smooth. Moreover, the fact that $D$ is an SNC divisor follows from \cref{prop:smooth} and \cref{prop:affine} since the AH-presentation affine space endowed with a linear torus action is given by toric divisors, which are always SNC \cite[Theorem~11.2.2]{CLS11}. 

\subsection*{Step three} To conclude this step we distingush two cases.
\begin{enumerate}
\item[$(i)$] If $\Delta_i$ is a point in the lattice $N_\mathbb{R}$, then by \cite[Theorem A]{K93}, the variety $X$ is obtained by a cyclic cover and is contractible. The divisors used for the cyclic covering must be $\mathbb{Z}_k$-acyclic for almost every $k$, meaning they have the $\mathbb{Z}_k$-homology of a point for almost every $k$. $\mathbb{Z}_k$-acyclic for almost every $k$ implies that they are, in fact, $\mathbb{Z}_k$-acyclic. The Abhyankar-Moh-Suzuki Theorem (see \cite{AM75,S74}) now ensures that this is only possible if and only if the smooth curves defined by $C_i \subset \mathbb{A}^{2}$ are two copies of the affine line $\mathbb{A}^1$.\\

\item[$(ii)$] If $\Delta_i$ admits a non-empty relative interior, then $C_i$ is the image of a fixed-point locus for a subtorus $T'$ of $T$ by \cref{lem:fixed-points}. Now, by \cite[Lemma 5.6]{KPR89} we have that  $X/\!/T'\simeq C_i$ is isomorphic to $\mathbb{A}^1$. 
\end{enumerate}
This concludes the proof.
\end{proof}

\begin{corollary}
Every fully hyperbolic $T$-action of complexity two on the affine space are either linearizable or is obtained after an equivariant bi-cyclic covering of affine space with a linear action.
\end{corollary}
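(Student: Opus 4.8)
The plan is to derive the corollary as a direct translation of \cref{thm:Main} back into geometric language, distinguishing cases according to the nature of the polyhedral coefficients $\Delta_i$ produced by the AH-presentation. First I would set up the situation: given a fully hyperbolic $T$-action of complexity two on $\mathbb{A}^n$, \cref{thm:Main} tells us that the combinatorial data is $\mathcal{D}=\Delta_1\otimes D_1+\Delta_2\otimes D_2+\sum_{i=3}^n\Delta_i\otimes E_i$, where $D_1,D_2$ are strict transforms of two SNC curves $C_1,C_2\subseteq\mathbb{A}^2$, each isomorphic to $\mathbb{A}^1$ and $\mu$-invariant, and the coefficients $\Delta_i$ coincide with those of the linear action on $T_{x_0}X$. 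The whole point is that the \emph{combinatorial} coefficients agree with a linear model, so the only freedom lies in how the boundary divisors $D_1,D_2$ sit inside $Y$ relative to the coordinate-axis divisors $H_1,H_2$ of the linear model in \cref{prop:affine}.

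The key dichotomy to exploit is precisely the one appearing in Step three of the proof of \cref{thm:Main}: either each $C_i$ is already a coordinate axis (after the $\mu$-action is accounted for), in which case $X$ is equivariantly isomorphic to the affine space with its linear model and the action is linearizable; or some $C_i$ is a genuinely non-toric embedding of $\mathbb{A}^1$ in $\mathbb{A}^2$. In the latter situation I would argue that passing to the curves $C_1,C_2$ amounts to composing cyclic coverings. Concretely, the coefficient $\Delta_i$ with empty relative interior encodes a cyclic cover by \cite[Lemma~7]{P15} (as used in Step one), and since there are at most two such boundary curves $C_1,C_2$ differing from the linear toric model, the discrepancy between $X$ and the linear affine space is captured by at most two cyclic coverings, i.e. a bi-cyclic covering. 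The Abhyankar--Moh--Suzuki theorem guarantees that each $C_i\cong\mathbb{A}^1$ is the image of a coordinate axis under an automorphism of $\mathbb{A}^2$, so after straightening one axis the remaining discrepancy is a single cyclic cover along the second curve, giving the bi-cyclic structure.

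I would then organize the conclusion as follows. If both $C_1$ and $C_2$ can be simultaneously straightened to the coordinate axes by an equivariant automorphism of $\mathbb{A}^2/\!/\mu$, then $Y$ together with $\mathcal{D}$ is equivariantly isomorphic to the linear model of \cref{prop:affine}, whence $X\cong T_{x_0}X\cong\mathbb{A}^n$ equivariantly and the action is linearizable. Otherwise, the failure to straighten both curves simultaneously is measured by the cyclic covers attached to $C_1$ and $C_2$ via \cite[Lemma~7]{P15}, and composing these two coverings exhibits $X$ as a bi-cyclic cover of an affine space carrying the associated linear action. This is exactly the trichotomy-collapsed-to-dichotomy asserted in the corollary.

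The main obstacle I expect is the precise bookkeeping in the non-linearizable case: one must check that the two cyclic coverings attached to $C_1$ and $C_2$ are genuinely independent and that their composition acts on an \emph{affine space} (not merely on some contractible variety) equipped with a \emph{linear} action. This requires verifying that after straightening $C_1$ by Abhyankar--Moh--Suzuki the intermediate variety is again an affine space with a linear $T$-action, so that the second cyclic cover along $C_2$ produces $X$; the smoothness and contractibility hypotheses, together with \cref{prop:smooth}, are what keep the intermediate stages in the class of affine spaces with linear actions. Ensuring that the $\mu$-invariance of the curves is compatible with the equivariant structure of both coverings is the delicate technical point.
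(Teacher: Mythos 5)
Your overall skeleton is the right one and matches the paper in spirit: the dichotomy comes from Step three of the proof of \cref{thm:Main}, the coefficients $\Delta_i$ with empty relative interior produce cyclic covers via \cite[Lemma~7]{P15}, there are at most two such coefficients (hence ``bi-cyclic''), and Abhyankar--Moh--Suzuki is used to identify the base of the covering with the linear model of \cref{prop:affine}. However, you have conflated two independent dichotomies, and this creates a genuine gap. The operative case division is \emph{not} whether each $C_i$ is literally a coordinate axis versus a ``non-toric'' embedding of $\mathbb{A}^1$; it is whether each $\Delta_i$ has non-empty relative interior (so that $D_i$ encodes a fixed-point component of a subtorus) or is a single point (so that $D_i$ is a branch divisor of a cyclic cover). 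A curve $C_i$ can fail to be a coordinate axis while $\Delta_i$ has interior --- that situation still lands in the linearizable case after an automorphism of $\mathbb{A}^2$ --- and conversely $C_i$ can be a coordinate axis while $\Delta_i$ is a point, in which case a cyclic cover is genuinely present. Your phrase ``the failure to straighten both curves simultaneously is measured by the cyclic covers'' is therefore not correct: the covers are read off from the polytopes, not from the geometry of the curves.

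The more serious omission is in the linearizable case. To straighten $C_1$ and $C_2$ simultaneously to the coordinate axes by an automorphism of $\mathbb{A}^2$ fixing the origin, it is not enough that each is a copy of $\mathbb{A}^1$ meeting the other with simple normal crossings (\cref{thm:Main} allows, a priori, extra intersection points away from the origin, as the paper's own example with $f_1=u$, $f_2=u+v+v^2$ illustrates). The paper rules this out by letting $S_i$ be the minimal linear subspace containing $\Delta_i$ and splitting into the sub-cases $S_1\cap S_2=0$ (an extra intersection point would give a disconnected fixed-point set of a subtorus, via \cref{lem:fixed-points}) and $S_1\cap S_2\neq 0$ (the one-dimensional fixed locus would have to be $\mathbb{A}^1$ by \cite[Lemma~5.6]{KPR89}, again excluding extra intersections). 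Only after $C_1\cap C_2=\{0\}$ is established can one invoke the straightening automorphism and conclude via \cref{prop:affine} that the action is linear. Your proposal assumes the simultaneous straightening as one branch of the dichotomy rather than proving it holds whenever both coefficients have interior, so as written the argument does not close.
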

\begin{proof}
   By \cref{thm:Main}, $\mathbb{A}^n = X(Y, \mathcal{D})$, where
$$
\mathcal{D} = \Delta_{1} \otimes D_{1} + \Delta_{2} \otimes D_{2} + \sum_{i=3}^{n} \Delta_{i} \otimes E_{i}.
$$
First, we assume that the coefficients $\Delta_{1}$ and $\Delta_{2}$ have non-empty relative interiors.\\
Let   $S_i$   be the minimal linear subspace containing $\Delta_i$ for $i = 1, 2$, by \cref{lem:fixed-points}.
\begin{enumerate}
    \item[$(i)$] If $S_1 \cap S_2 = 0$, then  $D_1 \cap D_2 $ encodes a set of fixed points (including the origin) for a torus $T$ with  $\dim T = \dim S_1 + \dim S_2$. This set is not connected, so this never occurs.
    \item[$(ii)$] If   $S_1 \cap S_2 \neq 0$, then $ D_1 \cap D_2$ encodes a set of fixed points of dimension $1$ for a torus $T$  with  $\dim T = \dim S_1$ , but by \cite[Lemma 5.6]{KPR89} this must be isomorphic to  $\mathbb{A}^1$, so this never occurs.
\end{enumerate}
Therefore, there exists an automorphism of $\mathbb{A}^2$ preserving the origin \cite{K96,J92}, such that we are precisely in the setting of \cref{prop:affine}, and so the action is linear.

Second, assume that a coefficients $\Delta_{1}$ or $\Delta_{2}$ has empty relative interior. Whithout loss of generality, we can assume that $\Delta_{1}$ has empty relative interior. Then there exists a finite group action $\sigma$ on the affine $X(Y, \mathcal{D})$ space such that $X(Y,\mathcal{D})/\!/\sigma$ is equivariantly isomorphic to affine space with a linear action. Indeed, by \cite[Lemma~7]{P15} the quotient $X(Y, \mathcal{D})/\!/\sigma\simeq X(Y',\mathcal{D}')$ is   a $T$-variety, with $\mathcal{D}'= \Delta'_{2} \otimes D_{2} + \sum_{i=3}^{n} \Delta'_{i} \otimes E_{i}$. However $D_{2}$ is the strict transform of a copy of $\mathbb{A}^1$ by \cite{AM75,S74}, we are once again in the setting of \cref{prop:affine}, and so the action is linear. 
\end{proof}

\begin{lemma}\label{lemma-product}
Let $X=\mathbb{A}^n$ endowed with a linear fully hyperbolic $T$-action of complexity two given by the matrix $F$.\\ Assume that the $T$-action does not act trivially on any coordinate of the affine space.
 If $X$ is equivariantly isomorphic to $X(Y,\mathcal{D})$ where $Y$ is the affine plane then $X$ is the product of two varieties of complexity one.
\end{lemma}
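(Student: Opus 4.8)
The plan is to extract the product decomposition directly from the combinatorial datum \eqref{eq:AH}. Write the defining sequence $0\to N\xrightarrow{F} N'\xrightarrow{P} N'/N\to 0$ with $N'=\mathbb{Z}^n$, and let $v_i:=P(e_i)$ be the $i$-th column of $P$, i.e.\ the ray attached to the $i$-th coordinate of $\mathbb{A}^n$. Since $X\simeq X(Y,\mathcal{D})$ with $Y$ the affine plane, $Y$ is a smooth toric surface, so $N'/N\simeq\mathbb{Z}^2$ and its fan is a single smooth cone $\operatorname{Cone}(w_1,w_2)$ with $w_1,w_2$ a lattice basis; its only rays are $\rho_1=\mathbb{R}_{\geq 0}w_1$ and $\rho_2=\mathbb{R}_{\geq 0}w_2$. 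By the construction recalled after \eqref{eq:AH}, each $v_i$ spans a ray of this fan: were some $v_i$ to lie in the interior of the cone, refining by its ray would subdivide the cone and create an exceptional divisor, so that $Y$ would be a nontrivial blow-up of $\mathbb{A}^2$ rather than $\mathbb{A}^2$ itself (cf.\ \cref{prop:affine}). Moreover no $v_i$ vanishes: as the action is fully hyperbolic the tail cone $s(\mathbb{Q}_{\geq 0}^n\cap F(\mathbb{Q}))=\{0\}$, so no nonzero lattice point of $N=\operatorname{im}(F)$ lies in the nonnegative orthant, whence $e_i\notin N$ and $v_i\neq 0$ for all $i$.

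Consequently each $v_i$ is a strictly positive multiple of $w_1$ or of $w_2$, and I would partition the coordinates accordingly as $\{1,\dots,n\}=I_1\sqcup I_2$. Choosing $w_1,w_2$ as the standard basis of $N'/N$ and splitting $N'=\mathbb{Z}^{I_1}\oplus\mathbb{Z}^{I_2}$, the matrix $P$ becomes block diagonal, $P=\begin{pmatrix}p_1&0\\0&p_2\end{pmatrix}$, with $p_j$ a nonzero row supported on $I_j$. The key step is then purely arithmetic: the block form gives $N=\ker P=\ker p_1\oplus\ker p_2=:N_1\oplus N_2$ with $N_j\subseteq\mathbb{Z}^{I_j}$. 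Hence the torus splits as $T=T_1\times T_2$, where $T_j\subseteq\mathbb{G}_m^{I_j}$ has cocharacter lattice $N_j$, and the linear $T$-action on $\mathbb{A}^n=\mathbb{A}^{I_1}\times\mathbb{A}^{I_2}$ is the product of the $T_j$-actions. Since $p_j$ has rank one, $\dim T_j=\operatorname{rank}N_j=|I_j|-1$, so each factor $(\mathbb{A}^{I_j},T_j)$ has complexity $|I_j|-(|I_j|-1)=1$, which is the assertion.

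Finally, the hypothesis that $T$ acts nontrivially on every coordinate is what makes the decomposition genuine. Indeed $T$ acts trivially on $x_j$ exactly when the coordinate character $e_j^*$ restricts to $0$ on $N$, i.e.\ factors through $P$; a short computation with the block form shows this happens if and only if the part $I_1$ or $I_2$ containing $j$ is a singleton, equivalently $\dim T_1=0$ or $\dim T_2=0$. Under the hypothesis we therefore have $|I_1|,|I_2|\geq 2$, so both $T_j$ are positive-dimensional and each factor is a bona fide $T$-variety of complexity one. I expect the only delicate point to be the geometric translation in the first paragraph—that $Y=\mathbb{A}^2$ forces every $v_i$ onto one of two coordinate rays; once this is secured, the splitting of $N$, and with it of $X$, is forced by the block structure of $P$, and the remaining steps are routine linear algebra.
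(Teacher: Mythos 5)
Your argument is correct and follows essentially the same strategy as the paper's proof: both reduce to the observation that, since $Y=\mathbb{A}^2$ is the toric variety of a single smooth cone with exactly two rays, the columns of $P$ split into two groups supported on complementary sets of coordinates, so that $P$ becomes block diagonal and the splitting of $N=\ker P$, of the torus, and of the action follows. The only difference is in execution—the paper reaches the block form via an invariant-monomial computation after normalizing one ray, whereas you choose the lattice basis along both rays directly—and your justification that each $v_i\neq 0$ and your accounting of where the hypothesis that $T$ acts nontrivially on every coordinate is used are in fact more explicit than in the paper.
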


\begin{proof}
    The $T$-action on $\mathbb{A}^n=\operatorname{Spec}(\mathbb{C}[x_1,\ldots,x_n])$  is determined by $F$, the weight matrix, and let $P$ be the matrix defined using \eqref{eq:AH}. By assumption, the rays of $P$ generate the cone of the toric variety $Y$. Since $Y$ is affine, it is isomorphic to the algebraic quotient $\mathbb{A}^n/\!/T\simeq\mathbb{A}^{2}/\!/\mu$ \cite{GKR08}. 
    We can assume, up to a lattice automorphism, that $P$ has the following form:
$$P = \left( \begin{array}{cccccc}
a_1 & \cdots & a_k & a_{k+1} & \cdots & a_n \\
0 & \cdots & 0 & b_{k+1} & \cdots & b_n
\end{array} \right),$$
with $b_i \neq 0$ and there exists a unique rational number $q \in \mathbb{Q}$ such that $\frac{a_i}{b_i} = q$ for all $i = k+1, \dots, n$.

This implies, in particular, that both $\prod_{i=1}^{n}x_i^{a_i}$ and $\prod_{i=k+1}^{n}x_i^{b_i}$ are invariant under the $T$-action. Since $b_i = a_i \cdot q$ for all $i = k+1, \dots, n$, we get:
$$
\prod_{i=k+1}^{n} x_i^{a_i q} = \left( \prod_{i=k+1}^{n} x_i^{a_i} \right)^q,
$$
and $\prod_{i=k+1}^{n}x_i^{a_i}$ is also invariant under the $T$-action.\\
This immediately implies that $P$ is, in fact, of the following form:
$$P = \left( \begin{array}{cccccc}
a_1 & \cdots & a_k & 0 & \cdots & 0 \\
0 & \cdots & 0 & b_{k+1} & \cdots & b_n
\end{array} \right).$$
Thus, $Y = \mathbb{A}^2$ and $\mathbb{A}^n = \mathbb{A}^k \times \mathbb{A}^{n-k} = \operatorname{Spec}(\mathbb{C}[x_1, \dots, x_k]) \times \operatorname{Spec}(\mathbb{C}[x_{k+1}, \dots, x_n])$, which is the product of two affine spaces with complexity-one $T$-actions.

\end{proof}

\section{Examples}

In this section, we will illustrate several phenomena that arise when considering smooth contractible $T$-varieties as in \cref{thm:Main}. In our final \cref{ex:final}, we will present  an exotic space that arise from the setup of our \cref{thm:Main}. Recall that an exotic space is an affine variety not isomorphic to $\mathbb{A}^4$ whose underlying manifold structure is

\begin{example} \label{ex:10}        
Considering the first action in \cref{example-of-blox-up}, i.e., $\mathbb{A}^{4}$ endowed with a  $T$-action  given by the matrix of weight $$F=\left(\begin{array}{cc}
1 &  0\\
-1 & 0\\
0 & 1\\
0 & -1\\
\end{array}\right)\quad\mbox{with}\quad P=\left(\begin{array}{cccc}
1 & 1 & 0 & 0\\
0 & 0 & 1 & 1
\end{array}\right),$$  so that $Y$ is isomorphic to $\mathbb{A}^{2}$   and the section $s$ in \eqref{eq:AH} can be chosen as $\left(\begin{array}{cccc}
1 & 0 & 0 & 0\\
0 & 0 & 1 & 0
\end{array}\right)$.

In this example $\mathbb{A}^{4}$ is the product of two copies of $\mathbb{A}^{2}$ endowed with a being a complexity one torus action having an unique fixed point $p_i$, see \cref{lemma-product}. In this case $\mathbb{A}^{4}=X(Y,\mathcal{D})$ is determine by $Y\simeq \mathbb{A}^2=\operatorname{Spec}\:\mathbb{C}[u,v]$  and $\mathcal{D}=([-1,0]\times 0)\otimes\{u=0\}+(0\times[-1,0])\otimes\{v=0\}$. Each prime divisor in $\mathcal{D}$ characterizing an irreducible component of the  fix locus of the $T$-action.
\end{example}

\begin{example}    
We now consider the same quotient in  $Y \simeq \mathbb{A}^2=\operatorname{Spec}\:\mathbb{C}[u,v]$ and polyhedral coefficients as in \cref{ex:10}, but we let  $$\mathcal{D} = ([-1,0] \times 0) \otimes \{f_1(u,v)=0\} + (0 \times [-1,0]) \otimes \{f_2(u,v)=0\}.$$ 
we then obtain the affine variety $X=X(Y,\mathcal{D})$ determined by the affine algebra
$$
A(X,\mathcal{D}) =\mathbb{C}[u,v,x_1,x_2,x_3,x_4]/(x_1x_2 - f_1(u,v), x_3x_4 - f_2(u,v)).
$$
If we specialize the divisors, $\{f_1(u,v)=u\}$  and $\{f_2(u,v)=v\}$, we recover the affine space with a linear action. If we specialize the divisors, $\{f_1(u,v)=u\}$  and $\{f_2(u,v)=u+v+v^2\}$, that are two curves, each isomorphic to $\mathbb{A}^{1}$, but having two intersection points: the origin and also $(0, -1)$. In this case we obtain that  $X=X(Y,\mathcal{D})$ determined by the affine algebra
$$
A(X,\mathcal{D}) = \mathbb{C}[v,x_1,x_2,x_3,x_4]/( x_3x_4 -x_1x_3 -v-v^2).
$$
This variety admits two different fixed points, determined by the intersection of the two curves. These points are given by the intersection of the line $x_1 = x_2 = x_3 = x_4 = 0$, which is fixed under the linear action in the ambient space, and the variety $X$. As a result, we obtain the fixed points $(0, 0, 0, 0, 0)$ and $(-1, 0, 0, 0, 0)$.
\end{example}

\begin{example}
We consider now the example in \cref{example-of-blox-up}~$(ii)$ with $n=4$. Let $X$ be the affine space $\mathbb{A}^{4}$ endowed with a $T$-action given by the matrix of weight $$F=\left(\begin{array}{cc}
1 &  1\\
1 & 1\\
-1 & 0\\
0 & -1\\
\end{array}\right)\quad\mbox{with}\quad P=\left(\begin{array}{ccccc}
1 & 0 & 1 &  1\\
0 & 1 & 1 &  1
\end{array}\right).$$
Then $X=X(Y,\mathcal{D})$, where $Y$ is the blow-up of  $\mathbb{A}^2=\operatorname{Spec}\:\mathbb{C}[u,v]$ at the origin and $\mathcal{D}=\Delta \otimes E$, with $E$ is the exceptional divisor of the blow-up and $\Delta$ is the opposite of the standard $2$-dimensional simplex, i.e., $\Delta$ is the polytope obtained as the convex hull of the points $\{(0,0);(-1,0);(0,-1)\}$. A straightforward generalization shows that $\mathbb{A}^n$ in \cref{example-of-blox-up}~$(ii)$ has the same quotient $Y$ and $\mathcal{D}=\Delta\otimes E$,
where $\Delta$ is the opposite of the standard $n-2$-dimensional simplex.
\end{example}

\begin{example}
Let now $X$ be the affine space  $\mathbb{A}^{4}$  endowed with the $T$-action given by the weight  matrix $$F=\left(\begin{array}{cc}
1 &  1\\
-1 & -1\\
-1 & 0\\
0 & -1\\
\end{array}\right)\quad\mbox{with}\quad P=\left(\begin{array}{ccccc}
1 & 1 & 0 &  0\\
1 & 0 & 1 &  1
\end{array}\right).$$
Now, $X=X(Y,\mathcal{D})$, where $Y$ is the blow-up up of $\mathbb{A}^2=\operatorname{Spec}\:\mathbb{C}[u,v]$ at the origin and $\mathcal{D}=\Delta_1\otimes D_1+ \Delta \otimes E$. Here $D_1$ is the strict transform of $\{u=0\}$, $E$ is the exceptional divisor of the blow-up, $\Delta_1$ is the convex hull of $\{(0,1);(1,0)\}$, and $\Delta$ is the standard 2-dimensional simplex.
\end{example}

\begin{example}\label{ex:14}
Let $X'$ be the affine space $\mathbb{A}^5=\operatorname{Spec}(\mathbb{C}[x,y_1,y_2,z,t])$ endowed with the $T$-action given by the weight matrix
$$F=\left(\begin{array}{cc}
6 &  0\\
-6 & 2\\
0 & -1\\
3 & 0\\
2 & 0\\
\end{array}\right)\quad\mbox{with}\quad P=\left(\begin{array}{cccccc}
1 & 1 & 2 & 0 &  0\\
0 & 1 & 2 & 2 &  0\\
0 & 1 & 2 & 0 &  3
\end{array}\right).$$
In this case, the algebraic quotient is $$\mathbb{A}^{5}/\!/T\simeq\mathbb{A}^{3}=\operatorname{Spec}(\mathbb{C}[z^{2}y_{1}y_{2}^{2},t^{3}y_{1}y_{2}^{2},xy_{1}y_{2}^{2}])=\operatorname{Spec}(\mathbb{C}[w,u,v])\,,$$
and the AH-quotient $Y'$ is the three dimensional affine space $\mathbb{A}^3$ blown-up at the origin. Then, $X'=X(Y',\mathcal{D})$, where  $\mathcal{D}=\Delta_1\otimes D'_1+\Delta_2\otimes D'_2+\Delta\otimes E'$. Here,
\begin{itemize}
\item $\Delta_{1}=(\frac{2}{3},0)$ and $D'_{1}$ is the strict transform of $\{u=0\}.$

\item  $\Delta_{2}=(\frac{-1}{2},0)$ and  $D'_{2}$ is the strict transform of $\{w=0\}.$

\item $\Delta$ is the convex hull of  $\left\{(0,0);(\frac{1}{6},0);(0,-\frac{1}{2})\right\}$  and $E'$ is the exceptional divisor.
\end{itemize}
\end{example}

\begin{example} \label{ex:final}
With the notation of \cref{ex:14}, let $Y_0=\{w+u+v+v^{2}=0\}$ and let $Y$ be its strict transform in $Y'$. A straightforward computation shows that $Y_0\simeq \operatorname{Spec}\mathbb{C}[u,v]$ and $Y$ is the blow-up of the origin in $Y_0$. Now, letting $\mathcal{D}=\Delta_1\otimes D_1+\Delta_2\otimes D_2+\Delta\otimes E$, where $E=E'\cap Y$ is the exceptional divisor and $D_i=Y\cap D'_1$ for $i=1,2$. Remark that $D_2=\{u+v+v^2\}$.

A straightforward computation shows that the affine variety $X=X(Y,\mathcal{D})$ is isomorphic to $\{x+x^2y_1y_2^2+z^2+t^3=0\} \subset \mathbb{A}^5$. Moreover, by \cref{thm:Main}, $X$ is a smooth and contractible having an unique fixed point. The variety $X$ is an exotic space. Indeed, in a preprint, the Makar-Limanov of $X$ been shown to be non-trivial \cite{GGP24}. Since the Makar-Limanov of the affine space is trivial, this shows that $X$ is not isomorphic to the affine space. See \cite{F17,ML96} for details on the Makar-Limanov invariant.
\end{example}

\begin{remark}
The variety $X$ can be viewed as an affine modification and multi-cylic cover of the affine space $\mathbb{A}^5$. By \cite{K93,KZ99}, it follows that the varieties are topologically contractible. It is not know if these varieties are stably affine spaces. Hence, all theses varieties are potential counterexamples to Zariski Cancellation Problem.
\end{remark}


\begin{thebibliography}{xxxxxxxxx}



\bibitem[AM75]{AM75} Abhyankar, Shreeram S.; Moh, Tzuong Tsieng
Embeddings of the line in the plane.
J. Reine Angew. Math.276(1975), 148–166.

\bibitem[AH06]{AH06} Altmann, Klaus; Hausen, Jürgen
Polyhedral divisors and algebraic torus actions.
Math. Ann.334(2006), no.3, 557–607.


\bibitem[BH85]{BH85} Bass, H.; Haboush, W.
Linearizing certain reductive group actions.
Trans. Amer. Math. Soc.292(1985), no.2, 463–482. 


\bibitem[BB67]{BB67}
Białynicki-Birula, A.
Remarks on the action of an algebraic torus on $k^n$. II. 
Bull. Acad. Polon. Sci. Sér. Sci. Math. Astronom. Phys.15(1967), 123–125.

\bibitem[CLS11]{CLS11} Cox, David A.; Little, John B.; Schenck, Henry K.
Toric varieties.
Grad. Stud. Math., 124
American Mathematical Society, Providence, RI, 2011.

\bibitem[D70]{D70} Demazure, Michel
Sous-groupes algébriques de rang maximum du groupe de Cremona.
Ann. Sci. École Norm. Sup. (4)3(1970), 507–588.

\bibitem[FZ03]{FZ03} Flenner, Hubert; Zaidenberg, Mikhail
Normal affine surfaces with $\mathbb{C}^*$-actions.
Osaka J. Math.40(2003), no.4, 981–1009.

\bibitem[F17]{F17} Freudenburg, Gene
Algebraic theory of locally nilpotent derivations.
Second edition
Encyclopaedia Math. Sci., 136
Invariant Theory Algebr. Transform. Groups, VII
Springer-Verlag, Berlin, 2017. xxii+319 pp.
ISBN:978-3-662-55348-0
ISBN:978-3-662-55350-3

\bibitem[GGP24]{GGP24}On embedding of linear hypersurfaces
Parnashree Ghosh, Neena Gupta, Ananya Pal https://arxiv.org/abs/2405.07205




\bibitem[G62]{G62} Gutwirth, A.
The action of an algebraic torus on the affine plane.
Trans. Amer. Math. Soc.105(1962), 407–414.

\bibitem[GKR08]{GKR08} Gurjar, R. V.; Koras, Mariusz; Russell, Peter
Two dimensional quotients of $\mathbb{C}^{n}$ by a reductive group.
Electron. Res. Announc. Math. Sci.15(2008), 62–64. 

\bibitem[J92]{J92} Jelonek, Zbigniew(PL-JAGL)
The Jacobian conjecture and the extensions of polynomial embeddings.
Math. Ann.294(1992), no.2, 289–293.

\bibitem[J42]{J42} Jung, Heinrich W. E.
Über ganze birationale Transformationen der Ebene.
J. Reine Angew. Math.184(1942), 161–174.

\bibitem[K93]{K93} Kaliman, Shulim
Smooth contractible hypersurfaces in $\mathbb{C}^{n}$ and exotic algebraic structures on $\mathbb{C}^{3}$.
Math. Z.214(1993), no.3, 499–509.

\bibitem[KML97]{KML97} SKaliman, Shulim; Makar-Limanov, Leonid
On the Russell-Koras contractible threefolds.
J. Algebraic Geom.6(1997), no.2, 247–268.

\bibitem[KKMLR97]{KKMLR97} Kaliman, S.; Koras, M.; Makar-Limanov, L.; Russell, P.
$\mathbb{C}^*$-actions on $\mathbb{C}^3$ are linearizable.(English summary)
Electron. Res. Announc. Amer. Math. Soc.3(1997), 63–71.

\bibitem[KZ99]{KZ99} Kaliman, Sh.; Zaidenberg, M.
Affine modifications and affine hypersurfaces with a very transitive automorphism group.
Transform. Groups4(1999), no.1, 53–95.

\bibitem[K79]{K79} Kambayashi, T.
Automorphism group of a polynomial ring and algebraic group action on an affine space.
J. Algebra60(1979), no.2, 439–451.

\bibitem[KR82]{KR82} Kambayashi, T.; Russell, P.
On linearizing algebraic torus actions.
J. Pure Appl. Algebra23(1982), no.3, 243–250.

\bibitem[K91]{K91} Knop, Friedrich
Nichtlinearisierbare Operationen halbeinfacher Gruppen auf affinen Räumen.
Invent. Math.105(1991), no.1, 217–220.

\bibitem[KR86]{KR86} Koras, Mariusz; Russell, Peter
$\mathbb{G}_{m}$-actions on $\mathbb{A}^{3}$.Proceedings of the 1984 Vancouver conference in algebraic geometry, 269–276.
CMS Conf. Proc., 6
Published by the American Mathematical Society, Providence, RI; for the, 1986
ISBN:0-8218-6010-0 
 

\bibitem[KR88.1]{KR88.1} Koras, Mariusz; Russell, Peter
Codimension 2 torus actions on affine n-space.Group actions and invariant theory (Montreal, PQ, 1988), 103–110.
CMS Conf. Proc., 10
Published by the American Mathematical Society, Providence, RI; for the, 1989
ISBN:0-8218-6015-1

\bibitem[KR88.2]{KR88.2} Koras, Mariusz; Russell, Peter
On linearizing "good" $\mathbb{C}^*$-actions on $\mathbb{C}^3$.Group actions and invariant theory (Montreal, PQ, 1988), 93–102.
CMS Conf. Proc., 10
Published by the American Mathematical Society, Providence, RI; for the, 1989
ISBN:0-8218-6015-1

\bibitem[KR97]{KR97}M. Koras, P. Russell. Contractible threefolds
and $\mathbb{C}^{*}$-actions on $\mathbb{C}^{3}$. J. Algebraic Geom.
6 (1997), no. 4, 671- 695.
\bibitem[K96]{K96}Kraft, Hanspeter
Challenging problems on affine n-space.(English summary)
Séminaire Bourbaki, Vol. 1994/95
Astérisque(1996), no.237, Exp. No. 802, 5, 295–317.

\bibitem[KP85]{KP85}Kraft, Hanspeter; Popov, Vladimir L.
Semisimple group actions on the three-dimensional affine space are linear.
Comment. Math. Helv.60(1985), no.3, 466–479.

\bibitem[KPR89]{KPR89}Kraft, Hanspeter; Petrie, Ted; Randall, John D.
Quotient varieties.
Adv. Math.74(1989), no.2, 145–162.

\bibitem[KS88]{KS88}Kraft, Hanspeter; Schwarz, Gerald W.
Reductive group actions on affine space with one-dimensional quotient.Group actions and invariant theory (Montreal, PQ, 1988), 125–132.
CMS Conf. Proc., 10
Published by the American Mathematical Society, Providence, RI; for the, 1989

\bibitem[KS92]{KS92}Kraft, Hanspeter; Schwarz, Gerald W.
Reductive group actions with one-dimensional quotient.
Inst. Hautes Études Sci. Publ. Math.(1992), no.76, 1–97.

\bibitem[LP17]{LP17} Liendo, Alvaro; Petitjean, Charlie
Smooth varieties with torus actions.(English summary)
J. Algebra490(2017), 204–218.

\bibitem[ML96]{ML96}Makar-Limanov, L.
On the hypersurface $x+x^2y+z^2+t^3=0$ in $\mathbb{C}^4$ or a $\mathbb{C}^3$-like threefold which is not C3. Israel J. Math.96(1996), 419–429.


\bibitem[MMJP91]{MMJP91} Masuda, Mikiya; Moser-Jauslin, Lucy; Petrie, Ted
Equivariant algebraic vector bundles over representations of reductive groups: applications.
Proc. Nat. Acad. Sci. U.S.A.88(1991), no.20, 9065–9066.

\bibitem[P83]{P83} Panyushev, D. I.
Semisimple groups of automorphisms of a four-dimensional affine space.
Izv. Akad. Nauk SSSR Ser. Mat.47(1983), no.4, 881–894.

\bibitem[P18]{P18} Petitjean, Charlie
Smooth contractible threefolds with hyperbolic Gm-actions via polyhedral divisors.
Manuscripta Math.156(2018), no.3-4, 399–408.

\bibitem[P15]{P15} Petitjean, Charlie
Cyclic covers of affine $\mathbb{T}$-varieties.
J. Pure Appl. Algebra219(2015), no.9, 4265–4277.

\bibitem[S89]{S89} Schwarz, Gerald W.
Exotic algebraic group actions.
C. R. Acad. Sci. Paris Sér. I Math.309(1989), no.2, 89–94.

\bibitem[S74]{S74} Propriétés topologiques des polynômes de deux variables complexes, et automorphismes algébriques de l'espace $\mathbb{C}^2$.(French)
J. Math. Soc. Japan26(1974), 241–257.

\end{thebibliography}
\end{document}